\definecolor{webgreen}{rgb}{0,.5,0}
\definecolor{webbrown}{rgb}{.6,0,0}
\tikzset{circle node/.style = {circle,inner sep=1pt,draw, fill=white},
        X node/.style = {fill=white, inner sep=1pt},
        dot node/.style = {circle, draw, inner sep=5pt}
        }
\newtheorem{theorem}{Theorem}
\newtheorem{proposition}[theorem]{Proposition}
\newtheorem{corollary}[theorem]{Corollary}
\newtheorem{conjecture}[theorem]{Conjecture}
\theoremstyle{definition}
\newtheorem{example}[theorem]{Example}
\newcommand{\seqnum}[1]{\href{http://oeis.org/#1}{\underline{#1}}}
\begin{document}

\begin{center}
\vskip 1cm{\LARGE\bf The second production matrix of a Riordan array} \vskip 1cm \large
Paul Barry\\
School of Science\\
Waterford Institute of Technology\\
Ireland\\
\href{mailto:pbarry@wit.ie}{\tt pbarry@wit.ie}
\end{center}
\vskip .2 in

\begin{abstract} A Riordan array is defined by its production matrix. In this paper, we explore the notion of the second production matrix of a Riordan array, characterizing the matrix that is generated by it. We indicate how this procedure can be generalized to the so-called $n$-th production matrix of a Riordan array.
\end{abstract}

\section{Introduction}
A Riordan array $M$ \cite{book, SGWW} is a matrix representation of an element $(g(x), f(x)) \in \mathbb{C}[[x]] \times \mathbb{C}[[x]]$ where $$g(x)=g_0+g_1x+ g_2 x^2+ \cdots, \quad g_0 \ne 0,$$ and
$$f(x)=f_1 x+ f_2x^2+ f_3 x^3+\cdots \quad f_0=0, f_1 \ne 0.$$

Then we have $$M_{n,k}=[x^n] g(x) f(x)^k.$$ The stipulation on $f(x)$ ensures that this matrix is lower-triangular, and since $g_0 \ne 0$ as well, it is invertible. We note that $M=(M_{n,k})_{0\le n,k \le \infty}$ is infinite in extent. 

The set of pairs of power series as described above form a group (the Riordan group), with the operations
$$(g(x), f(x))\cdot (u(x), v(x))=(g(x)u(f(x)), v(f(x))$$
$$(g(x), f(x))^{-1}=\left(\frac{1}{g(\bar{f}(x))}, \bar{f}(x)\right).$$
Here, $\bar{f}(x)$ is the compositional inverse of $f(x)$. Thus we have $f(\bar{f}(x))=x$ and $\bar{f}(f(x))=x$. The power series $\bar{f}(x)$ is the solution $u$ to $f(u)=x$ that satisfies $u(0)=0$.

These group operations are translated to ordinary matrix multiplication and the matrix inverse in the Riordan array matrix representation.

The fundamental theorem of Riordan arrays (FTRA) says that the group action of the Riordan group on $\mathbb{C}[[x]]$ is given by $$(g(x), f(x))\cdot h(x)=g(x)h(f(x)).$$

To the Riordan array $A$ we associate the matrix
$$P_M= M^{-1} \overline{M},$$
where $\overline{M}$ is the matrix $M$ with its top row removed. This matrix is a lower Hessenberg matrix. $P_M$ is called the \emph{production matrix} of $M$ \cite{Deutsch, He}, and with a knowledge of $P_M$ we can recover $M$. Riordan arrays are characterized by the form of their production array. Thus a matrix $M$ is a Riordan array if and only if $P_M$ is of the form
$$\left(
\begin{array}{ccccccc}
 z_0 & a_0 & 0 & 0 & 0 & 0 & 0 \\
 z_1 & a_1 & a_0 & 0 & 0 & 0 & 0 \\
 z_2 & a_2 & a_1 & a_0 & 0 & 0 & 0 \\
 z_3 & a_3 & a_2 & a_1 & a_0 & 0 & 0 \\
 z_4 & a_4 & a_3 & a_2 & a_1 & a_0 & 0 \\
 z_5 & a_5 & a_4 & a_3 & a_2 & a_1 & a_0 \\
 z_6 & a_6 & a_5 & a_4 & a_3 & a_2 & a_1 \\
\end{array}
\right).$$
We define the power series $Z(x)$ by  $Z(x)=\sum_{n=0}^{\infty} z_n x^n$; the sequence $z_0, z_1, \ldots$ is called the $Z$-sequence of the Riordan array $M$. We similarly define the power series $A(x)$ by $A(x)=\sum_{n=0}^{\infty} a_n x^n$. The sequence $a_0, a_1,\ldots$ is called the $A$-sequence of the Riordan array $M$.

For simplicity, we shall assume in the sequel that $g_0=f_1=1$.

Given an $A$-sequence and a $Z$-sequence, then the corresponding Riordan array is given by
$$ M = \left(1-\frac{xZ(x)}{A(x)}, \frac{x}{A(x)}\right)^{-1}.$$
When restricted to integer entries (which will be the case of all examples in this note), many important Riordan arrays are the be found in the On-Line Encyclopedia of Integer Sequences (OEIS) \cite{SL1, SL2}. For instance, Pascal's triangle \seqnum{A007318} $(\binom{n}{k})_{0 \le n,k\le \infty}$ represents the Riordan group element $\left(\frac{1}{1-x},\frac{x}{1-x}\right)$.

\section{The second production matrix of a Riordan array}
We define the second production matrix of a Riordan array $M$ to be the matrix
$$ M^{-1} \overline{\overline{M}},$$ with its first column removed. We write this as
$$\tilde{P}_M=|M^{-1} \overline{\overline{M}},$$ with an obvious notation.
\begin{example}
We take the example of the Riordan array $M=\left(\binom{n+k}{2k}\right)$ \seqnum{A085478}, which is the matrix representation of the Riordan group element $\left(\frac{1}{1-x}, \frac{x}{(1-x)^2}\right)$. This Riordan array begins
$$\left(
\begin{array}{ccccccc}
 1 & 0 & 0 & 0 & 0 & 0 & 0 \\
 1 & 1 & 0 & 0 & 0 & 0 & 0 \\
 1 & 3 & 1 & 0 & 0 & 0 & 0 \\
 1 & 6 & 5 & 1 & 0 & 0 & 0 \\
 1 & 10 & 15 & 7 & 1 & 0 & 0 \\
 1 & 15 & 35 & 28 & 9 & 1 & 0 \\
 1 & 21 & 70 & 84 & 45 & 11 & 1 \\
\end{array}
\right),$$ and its (first) production matrix begins
$$\left(
\begin{array}{ccccccc}
 1 & 1 & 0 & 0 & 0 & 0 & 0 \\
 0 & 2 & 1 & 0 & 0 & 0 & 0 \\
 0 & -1 & 2 & 1 & 0 & 0 & 0 \\
 0 & 2 & -1 & 2 & 1 & 0 & 0 \\
 0 & -5 & 2 & -1 & 2 & 1 & 0 \\
 0 & 14 & -5 & 2 & -1 & 2 & 1 \\
 0 & -42 & 14 & -5 & 2 & -1 & 2 \\
\end{array}
\right).$$
The matrix $M^{-1} \overline{\overline{M}}$ begins
$$\left(
\begin{array}{ccccccc}
 1 & 3 & 1 & 0 & 0 & 0 & 0 \\
 0 & 3 & 4 & 1 & 0 & 0 & 0 \\
 0 & -2 & 2 & 4 & 1 & 0 & 0 \\
 0 & 4 & 0 & 2 & 4 & 1 & 0 \\
 0 & -10 & -1 & 0 & 2 & 4 & 1 \\
 0 & 28 & 4 & -1 & 0 & 2 & 4 \\
 0 & -84 & -14 & 4 & -1 & 0 & 2 \\
\end{array}
\right).$$
Thus the matrix $|M^{-1} \overline{\overline{M}}$  begins
$$\left(
\begin{array}{ccccccc}
 3 & 1 & 0 & 0 & 0 & 0 & 0 \\
 3 & 4 & 1 & 0 & 0 & 0 & 0 \\
 -2 & 2 & 4 & 1 & 0 & 0 & 0 \\
 4 & 0 & 2 & 4 & 1 & 0 & 0 \\
 -10 & -1 & 0 & 2 & 4 & 1 & 0 \\
 28 & 4 & -1 & 0 & 2 & 4 & 1 \\
 -84 & -14 & 4 & -1 & 0 & 2 & 4 \\
\end{array}
\right).$$
We see that this is of the form of an ordinary production matrix, and hence it will generate a Riordan array.
\end{example}
This motivates the question: Can we characterize the matrix produced by the second production matrix $\tilde{P}_M$, in terms of the original matrix $M$?

\section{The principal result}
We answer the above question as follows.
\begin{proposition} The matrix $\tilde{M}$ produced by the second production matrix $\tilde{P}_M$ of a Riordan array $M$ is given by
$$\left(\frac{\bar{f}(x)}{x g(\bar{f}(x))}, \frac{\bar{f}(x)^2}{x}\right)^{-1}.$$
\end{proposition}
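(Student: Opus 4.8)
The engine of the argument is the identity $M^{-1}\overline{\overline{M}} = P_M^2$, valid for every Riordan array $M$. It holds because deleting the top row commutes with multiplication on the right, $\overline{AB} = \overline{A}\,B$; applying this to the defining relation $\overline{M} = M P_M$ gives $\overline{\overline{M}} = \overline{M}\,P_M = M P_M^2$, whence $M^{-1}\overline{\overline{M}} = P_M^2$. Thus $\tilde{P}_M$ is precisely the matrix $P_M^2$ with its first column removed, and proving the proposition reduces to computing the $A$- and $Z$-sequences of this matrix and substituting them into the reconstruction formula $\tilde{M} = \left(1 - \frac{x\,\tilde{Z}(x)}{\tilde{A}(x)},\, \frac{x}{\tilde{A}(x)}\right)^{-1}$ recalled in the Introduction.

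To get at the columns of $P_M^2$ I would first write $P_M$ as an operator on $\mathbb{C}[[x]]$. Since $\overline{M} = M P_M$ and $M$ acts on power series by $h \mapsto g(x)\,h(f(x))$ (the FTRA), conjugating the shift $h \mapsto (h - h(0))/x$ through $M$ gives $P_M\colon h(x) \mapsto \frac{h(x) - h(0)/g(\bar f(x))}{\bar f(x)}$. Evaluating this on $h = 1$ and $h = x^k$ recovers the columns of $P_M$: column $0$ has generating function $Z(x)$ and column $k$ has generating function $x^k/\bar f(x) = x^{k-1}A(x)$ for $k \ge 1$, using the standard identity $A(x) = x/\bar f(x)$. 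Applying the operator $P_M$ once more to each of these columns (and simplifying with $A(x) = x/\bar f(x)$) yields that column $k$ of $P_M^2$ has generating function $x^k/\bar f(x)^2$ for $k \ge 2$, while column $1$ has generating function $\frac{x}{\bar f(x)^2} - \frac{1}{\bar f(x)\,g(\bar f(x))}$; here one must keep the exceptional columns $k=0$ (to be discarded) and $k=1$ separate from the generic ones $k\ge 2$, since only for $k\ge 2$ does the input to the operator vanish at $0$.

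Deleting column $0$, one reads off that $\tilde{P}_M$ does have the shape of an ordinary production matrix: the columns indexed $k \ge 1$ are the successive shifts $x^{k-1}\tilde{A}(x)$ with $\tilde{A}(x) = x^2/\bar f(x)^2$, and column $0$ is $\tilde{Z}(x) = \frac{x}{\bar f(x)^2} - \frac{1}{\bar f(x)\,g(\bar f(x))}$. Substituting $\tilde{A}$ and $\tilde{Z}$ into the reconstruction formula and simplifying — using $f(\bar f(x)) = x$ — gives $\frac{x}{\tilde{A}(x)} = \frac{\bar f(x)^2}{x}$ and $1 - \frac{x\,\tilde{Z}(x)}{\tilde{A}(x)} = \frac{\bar f(x)}{x\,g(\bar f(x))}$, which is exactly the asserted description of $\tilde{M}$.

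The only delicate step is obtaining the operator form of $P_M$ and applying it carefully to the columns of $P_M$, treating the two exceptional columns separately; after that the proof is a short power-series manipulation. As a consistency check against the Example one may verify that for $M = \left(\frac{1}{1-x}, \frac{x}{(1-x)^2}\right)$ one has $\bar f(x) = x - 2x^2 + 5x^3 - \cdots$ with $\bar f(x)/(1-\bar f(x))^2 = x$, and the formula then reproduces the Riordan array generated by the second production matrix exhibited there.
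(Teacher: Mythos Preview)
Your proof is correct and reaches the same $\tilde Z$, $\tilde A$ and the same reconstruction step as the paper, but the route to the column generating functions of $M^{-1}\overline{\overline{M}}$ is organised differently. The paper works directly: it writes the $k$-th column of $\overline{\overline{M}}$ as having generating function $(g(x)f(x)^k - x\cdot 0^{k-1})/x^2$, applies $M^{-1}=(1/g(\bar f),\bar f)$ via the FTRA, and reads off $\tilde Z$ and $\tilde A$ immediately. You instead insert the identity $M^{-1}\overline{\overline{M}}=P_M^2$ (via $\overline{AB}=\overline{A}\,B$), derive the closed operator form $P_M\colon h\mapsto (h-h(0)/g(\bar f))/\bar f$ by conjugating the shift through $M$, and iterate it once on the known columns of $P_M$. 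The computations are equivalent in substance; what your framing buys is a clean explanation of \emph{why} the higher production matrices behave uniformly (the $n$-th one is $P_M^n$ with its first $n-1$ columns removed), which makes the paper's subsequent conjecture for $M^{(n)}$ look inevitable. The paper's direct approach, on the other hand, avoids setting up the operator description of $P_M$ and is slightly shorter for the single case $n=2$.
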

\begin{corollary}  The matrix $\tilde{M}$ produced by the second production matrix $\tilde{P}_M$ of a Riordan array $M$ is given by
$$\left(\frac{x}{f(x)}, \frac{x^2}{f(x)}\right)^{-1} \cdot (g(x), f(x)).$$
\end{corollary}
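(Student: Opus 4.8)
The plan is to derive the corollary directly from the Proposition, using only the group law of the Riordan group; no new combinatorial input is needed. First I would record that the two factors appearing on the right-hand side are genuine Riordan array elements. Under the normalisation $g_0=f_1=1$ we have $f(x)=x+O(x^2)$, so $\frac{x}{f(x)}=1+O(x)$ has nonzero constant term, while $\frac{x^2}{f(x)}=x+O(x^2)$ has order $1$ with leading coefficient $1$; hence $\left(\frac{x}{f(x)},\frac{x^2}{f(x)}\right)$ lies in the Riordan group and the product $\left(\frac{x}{f(x)},\frac{x^2}{f(x)}\right)^{-1}\cdot(g(x),f(x))$ is well-defined.

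Next, taking the inverse of the desired identity and using $(UV)^{-1}=V^{-1}U^{-1}$, it is equivalent to prove
$$\left(\frac{\bar{f}(x)}{x\, g(\bar{f}(x))},\ \frac{\bar{f}(x)^2}{x}\right) = (g(x),f(x))^{-1}\cdot\left(\frac{x}{f(x)},\frac{x^2}{f(x)}\right),$$
since the left-hand side is exactly $\tilde{M}^{-1}$ by the Proposition.

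Then I would evaluate the right-hand side. By the inverse formula, $(g(x),f(x))^{-1}=\left(\frac{1}{g(\bar{f}(x))},\bar{f}(x)\right)$, and by the multiplication rule $(u(x),v(x))\cdot(s(x),t(x))=(u(x)\,s(v(x)),t(v(x)))$ the product becomes
$$\left(\frac{1}{g(\bar{f}(x))}\cdot\frac{\bar{f}(x)}{f(\bar{f}(x))},\ \frac{\bar{f}(x)^2}{f(\bar{f}(x))}\right).$$
Applying $f(\bar{f}(x))=x$ to both slots collapses this to $\left(\frac{\bar{f}(x)}{x\, g(\bar{f}(x))},\frac{\bar{f}(x)^2}{x}\right)$, which is precisely what was required.

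There is essentially no hard step here: the only point that needs care is bookkeeping in the composition rule — making sure the substitution $x\mapsto\bar{f}(x)$ is applied to the correct factor, and that it is $f(\bar{f}(x))=x$ (rather than $\bar{f}(f(x))=x$) that is used. As a sanity check one could instead compute $\left(\frac{x}{f(x)},\frac{x^2}{f(x)}\right)^{-1}$ explicitly via the inverse formula and multiply out by $(g(x),f(x))$ on the right, but the factoring argument above is shorter and makes the underlying structure transparent.
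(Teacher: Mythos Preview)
Your argument is correct and essentially identical to the paper's own derivation: both invert the product $\left(\frac{x}{f(x)},\frac{x^2}{f(x)}\right)^{-1}\cdot(g(x),f(x))$, compute $(g,f)^{-1}\cdot\left(\frac{x}{f(x)},\frac{x^2}{f(x)}\right)$ via the group law, and simplify using $f(\bar f(x))=x$ to obtain $\left(\frac{\bar f}{x\,g(\bar f)},\frac{\bar f^2}{x}\right)$, matching $\tilde M^{-1}$ from the Proposition. Your extra verification that $\left(\frac{x}{f(x)},\frac{x^2}{f(x)}\right)$ is a genuine Riordan element is a welcome addition the paper leaves implicit.
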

Thus $$\tilde{M}=N \cdot M,$$ where $N$ is the Riordan array defined by $\left(\frac{x}{f(x)}, \frac{x^2}{f(x)}\right)^{-1}$.

\begin{proof} We prove the corollary first.
We let
$$A = \left(\frac{x}{f(x)}, \frac{x^2}{f(x)}\right)^{-1} \cdot (g(x), f(x)).$$
The
\begin{align*}A^{-1}&= (g(x), f(x))^{-1}\cdot \left(\frac{x}{f(x)}, \frac{x^2}{f(x)}\right)\\
&= \left(\frac{1}{g(\bar{f})}, \bar{f}\right)\cdot \left(\frac{x}{f(x)}, \frac{x^2}{f(x)}\right)\\
&= \left(\frac{1}{g(\bar{f})} \frac{\bar{f}}{f(\bar{f}(x))}, \frac{\bar{f}^2}{f(\bar{f}(x))}\right)\\
&= \left(\frac{\bar{f}}{x g(\bar{f})}, \frac{\bar{f}^2}{x}\right).\end{align*} Thus we get that
$$\left(\frac{x}{f(x)}, \frac{x^2}{f(x)}\right)^{-1} \cdot (g(x), f(x))=\left(\frac{\bar{f}}{x g(\bar{f})}, \frac{\bar{f}^2}{x}\right)^{-1}.$$

We now turn to proving the proposition. For this, we look at the generating functions of the columns of $\overline{\overline{M}}$, and operate on them by $M^{-1}$.
The first column of $\overline{\overline{M}}$ has generating function
$$\frac{g(x)-1-g_1 x}{x^2}.$$ We use the FTRA to see what the effect of $(g, f)^{-1}=\left(\frac{1}{g(\bar{f})}, \bar{f}\right)$ is on it. We find that the resulting first column has generating function
$$\frac{1}{g(\bar{f})}\left(\frac{g(\bar{f})-1-g_1 \bar{f}}{\bar{f}^2}\right).$$
We actually discard this to produce $\tilde{P}_M$.

Subsequent columns of $\overline{\overline{M}}$ have generating functions
$$\frac{g(x)f(x)^k - x.0^{k-1}}{x^2},$$ and hence this leads to the column generating functions of $\tilde{P}_M$ being given by
$$\frac{1}{g(\bar{f})}\left(\frac{g(\bar{f})f(\bar{f})^k-\bar{f}0^{k-1}}{\bar{f}^2}\right)=\frac{x^k}{\bar{f}^2}-\frac{0^{k-1}}{\bar{f}g(\bar{f})}.$$ We then obtain 
$$\tilde{Z}(x)=\frac{x}{\bar{f}^2}-\frac{1}{\bar{f}g(\bar{f})}=\frac{1}{\bar{f}}\left(\frac{x}{\bar{f}}-\frac{1}{g(\bar{f})}\right),$$ and 
$$\tilde{A}(x)=\frac{x^2}{\bar{f}^2}.$$ 
Here, $\tilde{Z}$ and $\tilde{A}$ are the $Z$- and $A$-series of $\tilde{P}_M$. 
We then know that the matrix produced by $\tilde{P}_M$ is given by 
$$\left(1-\frac{x\tilde{Z}(x)}{\tilde{A}(x)}, \frac{x}{\tilde{A}(x)}\right)^{-1}.$$ 
Now we have 
\begin{align*}
\left(1-\frac{x\tilde{Z}(x)}{\tilde{A}(x)}, \frac{x}{\tilde{A}(x)}\right)&=\left(
1-\frac{x}{\left(\frac{x^2}{\bar{f}^2}\right)}.\frac{1}{\bar{f}}\left(\frac{x}{\bar{f}}-\frac{1}{g(\bar{f})}\right), \frac{x}{\left(\frac{x^2}{\bar{f}^2}\right)}\right)\\
&=\left(1-\frac{x\bar{f}^2}{x^2}\frac{1}{\bar{f}}\left(\frac{x}{\bar{f}}-\frac{1}{g(\bar{f})}\right), \frac{x \bar{f}^2}{x^2}\right)\\
&=\left(1-\frac{\bar{f}}{x}\left(\frac{x}{\bar{f}}-\frac{1}{g(\bar{f})}\right), \frac{\bar{f}^2}{x}\right)\\
&=\left(1-\left(1-\frac{\bar{f}}{x g(\bar{f})}\right), \frac{\bar{f}^2}{x}\right)\\
&=\left(\frac{\bar{f}}{xg(\bar{f})}, \frac{\bar{f}^2}{x}\right).\end{align*}

\end{proof}

\section{Examples}
\begin{example} We return to the initial example where we had $M=\left(\binom{n+k}{2k}\right)$, which represents the Riordan group element $\left(\frac{1}{1-x}, \frac{x}{(1-x)^2}\right)$.  The second production matrix $\tilde{P}_M$ in that case began
$$\left(
\begin{array}{ccccccc}
 3 & 1 & 0 & 0 & 0 & 0 & 0 \\
 3 & 4 & 1 & 0 & 0 & 0 & 0 \\
 -2 & 2 & 4 & 1 & 0 & 0 & 0 \\
 4 & 0 & 2 & 4 & 1 & 0 & 0 \\
 -10 & -1 & 0 & 2 & 4 & 1 & 0 \\
 28 & 4 & -1 & 0 & 2 & 4 & 1 \\
 -84 & -14 & 4 & -1 & 0 & 2 & 4 \\
\end{array}
\right).$$
Here, we have $f(x)=\frac{x}{(1-x)^2}$, which gives us $\bar{f}(x)=1-c(-x)=xc(-x)^2$, where 
$$c(x)=\frac{1-\sqrt{1-4x}}{2x},$$ 
the generating function of the Catalan numbers $C_n=\frac{1}{n+1}\binom{2n}{n}$ \seqnum{A000108}. We have $g(x)=\frac{1}{1-x}$.
We find that 
$$\frac{\bar{f}}{x g(\bar{f})}=\frac{(1+x)\sqrt{1+4x}-1-3x}{2x^3},$$ and 
$$\frac{\bar{f}(x)^2}{x}=\frac{1+4x+2x^2-(1+2x)\sqrt{1+4x}}{2x^3}=xc(-x)^4.$$ 
By the proposition, the produced matrix is now equal to the matrix representation of the Riordan group element
$$\left(\frac{(1+x)\sqrt{1+4x}-1-3x}{2x^3}, xc(-x)^4\right)^{-1}.$$ 
More simply, this is equal to the matrix given by 
$$\left((1-x)^2, x(1-x)^2\right)^{-1} \cdot \left(\frac{1}{1-x}, \frac{x}{(1-x)^2}\right).$$ 
We obtain
\begin{scriptsize}
$$\left(
\begin{array}{cccccc}
 1 & 0 & 0 & 0 & 0 & 0 \\
 3 & 1 & 0 & 0 & 0 & 0 \\
 12 & 7 & 1 & 0 & 0 & 0 \\
 55 & 42 & 11 & 1 & 0 & 0 \\
 273 & 245 & 88 & 15 & 1 & 0 \\
 1428 & 1428 & 627 & 150 & 19 & 1 \\
\end{array}
\right)=\left(
\begin{array}{cccccc}
 1 & 0 & 0 & 0 & 0 & 0 \\
 2 & 1 & 0 & 0 & 0 & 0 \\
 7 & 4 & 1 & 0 & 0 & 0 \\
 30 & 18 & 6 & 1 & 0 & 0 \\
 143 & 88 & 33 & 8 & 1 & 0 \\
 728 & 455 & 182 & 52 & 10 & 1 \\
\end{array}
\right)\cdot \left(
\begin{array}{cccccc}
 1 & 0 & 0 & 0 & 0 & 0 \\
 1 & 1 & 0 & 0 & 0 & 0 \\
 1 & 3 & 1 & 0 & 0 & 0 \\
 1 & 6 & 5 & 1 & 0 & 0 \\
 1 & 10 & 15 & 7 & 1 & 0 \\
 1 & 15 & 35 & 28 & 9 & 1 \\
\end{array}
\right).$$
\end{scriptsize}
We find that in this case that we have
$$\tilde{M}_{n,k}=\sum_{i=0}^n \frac{2i+2}{3n+2-i} \binom{3n+2-i}{n-i}\binom{i+k}{2k}.$$ 
\end{example}

\section{The third production matrix}
In this section, we continue our previous example. 
\begin{example} In this example, we start again with 
$M=\left(\binom{n+k}{2k}\right)$. This time, using an obvious notation, we look at 
$$\tilde{\tilde{P}}_M=||M^{-1} \overline{\overline{\overline{M}}},$$ and ask how the array generated by $\tilde{\tilde{P}}_M$ relates to $M$ (and to $\tilde{M}$). 
Calculating, we find that $M^{-1} \overline{\overline{\overline{M}}}$ begins
$$\left(
\begin{array}{ccccccc}
 1 & 6 & 5 & 1 & 0 & 0 & 0 \\
 0 & 4 & 10 & 6 & 1 & 0 & 0 \\
 0 & -3 & 0 & 9 & 6 & 1 & 0 \\
 0 & 6 & 5 & 2 & 9 & 6 & 1 \\
 0 & -15 & -14 & 0 & 2 & 9 & 6 \\
 0 & 42 & 41 & 0 & 0 & 2 & 9 \\
 0 & -126 & -126 & -1 & 0 & 0 & 2 \\
\end{array}
\right).$$ 
Then $\tilde{\tilde{P}}_M=||M^{-1} \overline{\overline{\overline{M}}}$ begins
$$\left(
\begin{array}{cccccccc}
 5 & 1 & 0 & 0 & 0 & 0 & 0 & 0 \\
 10 & 6 & 1 & 0 & 0 & 0 & 0 & 0 \\
 0 & 9 & 6 & 1 & 0 & 0 & 0 & 0 \\
 5 & 2 & 9 & 6 & 1 & 0 & 0 & 0 \\
 -14 & 0 & 2 & 9 & 6 & 1 & 0 & 0 \\
 41 & 0 & 0 & 2 & 9 & 6 & 1 & 0 \\
 -126 & -1 & 0 & 0 & 2 & 9 & 6 & 1 \\
 402 & 6 & -1 & 0 & 0 & 2 & 9 & 6 \\
\end{array}
\right).$$ 
This then generates the array $\tilde{\tilde{M}}$ that begins 
$$\left(
\begin{array}{cccccccc}
 1 & 0 & 0 & 0 & 0 & 0 & 0 & 0 \\
 5 & 1 & 0 & 0 & 0 & 0 & 0 & 0 \\
 35 & 11 & 1 & 0 & 0 & 0 & 0 & 0 \\
 285 & 110 & 17 & 1 & 0 & 0 & 0 & 0 \\
 2530 & 1100 & 221 & 23 & 1 & 0 & 0 & 0 \\
 23751 & 11165 & 2635 & 368 & 29 & 1 & 0 & 0 \\
 231880 & 115192 & 30345 & 5106 & 551 & 35 & 1 & 0 \\
 2330445 & 1206348 & 344318 & 66010 & 8729 & 770 & 41 & 1 \\
\end{array}
\right).$$ 
We find the following.
$$\tilde{\tilde{M}} = ((1-x)^4, x(1-x)^4)^{-1} \cdot \left(\frac{1}{1-x}, \frac{x}{(1-x)^2}\right).$$ 
\end{example}
In the general case, we have the following result.
\begin{proposition} We let $\tilde{\tilde{P}}_M=||M^{-1} \overline{\overline{\overline{M}}}$ be the third production matrix of the Riordan array $M$ given by $(g(x), f(x))$. Let $\tilde{\tilde{M}}$ denote the matrix generated by $\tilde{\tilde{P}}_M$. Then we have 
$$ \tilde{\tilde{M}} = \left(\left(\frac{x}{f(x)}\right)^2, x\left(\frac{x}{f(x)}\right)^2\right)^{-1}\cdot (g(x), f(x)).$$
\end{proposition}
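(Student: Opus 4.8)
The plan is to argue exactly as in the proof of the proposition on the second production matrix: compute the generating functions of the columns of $\overline{\overline{\overline{M}}}$, apply $M^{-1}$ to each via the FTRA, delete the first two of the resulting columns, and recognise what remains as the production matrix of a Riordan array whose $Z$- and $A$-series we can write down explicitly. Throughout we use $f(\bar{f}(x))=x$ and the normalisations $g_0=f_1=1$.

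First I would record the columns of $\overline{\overline{\overline{M}}}$, that is, of $M$ with its top three rows removed. Since $g(x)f(x)^k=x^k+\cdots$, we have $M_{j,k}=0$ whenever $j<k$; hence among the columns $k\ge 2$ that survive the $||$ operation, the only entry in rows $0,1,2$ that does not vanish is $M_{2,2}=[x^2]g(x)f(x)^2=1$. Thus, for $k\ge 2$, the $k$-th column of $\overline{\overline{\overline{M}}}$ has generating function
\[
\frac{g(x)f(x)^k-x^2\cdot 0^{k-2}}{x^3},
\]
where $0^{k-2}$ is $1$ for $k=2$ and $0$ for $k\ge 3$. Applying $(g,f)^{-1}=\bigl(\tfrac{1}{g(\bar f)},\bar f\bigr)$ by the FTRA, the corresponding column of $M^{-1}\overline{\overline{\overline{M}}}$ has generating function
\[
\frac{1}{g(\bar f)}\left(\frac{g(\bar f)f(\bar f)^k-\bar f^2\cdot 0^{k-2}}{\bar f^3}\right)=\frac{x^k}{\bar f^3}-\frac{0^{k-2}}{\bar f\,g(\bar f)}.
\]

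After deleting the columns of index $0$ and $1$ of $M^{-1}\overline{\overline{\overline{M}}}$ (these involve $g_1,g_2,f_2$ and play no role), the column of index $j\ge 0$ of $\tilde{\tilde{P}}_M$ has generating function $\dfrac{x^{j+2}}{\bar f^3}-\dfrac{0^{j}}{\bar f\,g(\bar f)}$. For $j\ge 1$ this equals $x^{j-1}\cdot\dfrac{x^3}{\bar f^3}$, so $\tilde{\tilde{P}}_M$ is of the form of an ordinary production matrix, with
\[
\tilde{\tilde{Z}}(x)=\frac{x^2}{\bar f^3}-\frac{1}{\bar f\,g(\bar f)},\qquad \tilde{\tilde{A}}(x)=\frac{x^3}{\bar f^3}.
\]
By $\tilde{\tilde{M}}=\bigl(1-\tfrac{x\tilde{\tilde{Z}}(x)}{\tilde{\tilde{A}}(x)},\tfrac{x}{\tilde{\tilde{A}}(x)}\bigr)^{-1}$, together with the short simplifications $\tfrac{x}{\tilde{\tilde{A}}(x)}=\tfrac{\bar f^3}{x^2}$ and $\tfrac{x\tilde{\tilde{Z}}(x)}{\tilde{\tilde{A}}(x)}=1-\tfrac{\bar f^2}{x^2 g(\bar f)}$, this becomes
\[
\tilde{\tilde{M}}=\left(\frac{\bar f^2}{x^2\,g(\bar f)},\frac{\bar f^3}{x^2}\right)^{-1}.
\]

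Finally I would identify this with the asserted product, mimicking the proof of the corollary: a direct Riordan composition, using $f(\bar f)=x$, gives
\[
(g,f)^{-1}\cdot\left(\Bigl(\tfrac{x}{f}\Bigr)^2,\;x\Bigl(\tfrac{x}{f}\Bigr)^2\right)=\left(\tfrac{1}{g(\bar f)},\bar f\right)\cdot\left(\tfrac{x^2}{f^2},\tfrac{x^3}{f^2}\right)=\left(\frac{\bar f^2}{x^2\,g(\bar f)},\frac{\bar f^3}{x^2}\right),
\]
and inverting both sides finishes the argument. The one genuinely delicate point is the bookkeeping at the boundary column $k=2$: one must verify that $M_{2,2}=1$ is the unique nonzero entry the deleted rows contribute among the retained columns, since it is precisely this $1$ that produces the $-\tfrac{1}{\bar f g(\bar f)}$ correction in $\tilde{\tilde{Z}}$ (and hence the factor $1/g(\bar f)$, rather than a pure power of $\bar f$, in $\tilde{\tilde{M}}$). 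The identical scheme, with $x^3$ replaced by $x^{n}$ and three deleted rows by $n$, yields the $n$-th production matrix result $\bigl((x/f)^{n-1},\,x(x/f)^{n-1}\bigr)^{-1}\cdot(g,f)$.
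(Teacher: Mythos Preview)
Your proof is correct and follows essentially the same route as the paper's: you compute the column generating functions of $\overline{\overline{\overline{M}}}$, apply $M^{-1}$ via the FTRA, read off $\tilde{\tilde{Z}}=\frac{x^2}{\bar f^3}-\frac{1}{\bar f\,g(\bar f)}$ and $\tilde{\tilde{A}}=\frac{x^3}{\bar f^3}$, and then invert $\bigl(1-\tfrac{x\tilde{\tilde Z}}{\tilde{\tilde A}},\tfrac{x}{\tilde{\tilde A}}\bigr)$ to obtain $\tilde{\tilde M}=\bigl(\tfrac{\bar f^2}{x^2 g(\bar f)},\tfrac{\bar f^3}{x^2}\bigr)^{-1}$, finally identifying this with the stated product exactly as in the corollary. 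The only difference is that you spell out the boundary bookkeeping at $k=2$ and the product identification more explicitly than the paper's terse sketch.
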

\begin{proof}
The proof is similar to the case for $\tilde{M}$. The $A$ sequence for $\tilde{\tilde{M}}$ is derived by operating on $\frac{gf^3}{x^3}$ by the $\left(\frac{1}{g(\bar{f})}, \bar{f}\right)$ to give $\tilde{\tilde{A}}=\frac{x^3}{\bar{f}^3}$. We find that 
$$\tilde{\tilde{Z}}=\frac{1}{\bar{f}^2}\left(\frac{x^2}{\bar{f}}-\frac{\bar{f}}{g(\bar{f})}\right).$$ 
Then the inverse of $\tilde{\tilde{M}}$ is given by 
$$\left(1-\frac{x \tilde{\tilde{Z}}}{\tilde{\tilde{A}}}, \frac{x}{\tilde{\tilde{A}}}\right)=\left(\frac{\bar{f}^2}{x^2 g(\bar{f})}, \frac{\bar{f}^3}{x^2}\right).$$ 
Thus 
$$\tilde{\tilde{M}}=\left(\frac{\bar{f}^2}{x^2 g(\bar{f})}, \frac{\bar{f}^3}{x^2}\right)^{-1}=\left(\left(\frac{x}{f(x)}\right)^2, x\left(\frac{x}{f(x)}\right)^2\right)^{-1}\cdot (g(x), f(x)).$$
\end{proof}  
The following conjecture follows, where again we use an obvious notation.
\begin{conjecture} Let $P^{(n)}_M=|^{(n-1)}M^{-1}\overline{M}^{(n)}$ be the $n$-th production matrix of $M$.  Then the matrix $M^{(n)}$ produced by $P^{(n)}_M$ is given by
$$M^{(n)}=\left(\left(\frac{x}{f(x)}\right)^{n-1}, x\left(\frac{x}{f(x)}\right)^{n-1}\right)^{-1}\cdot (g(x), f(x)).$$
\end{conjecture}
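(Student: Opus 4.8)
The plan is to run, for general $n$, exactly the argument that established the Proposition and its $n=3$ analogue: compute the generating functions of the columns of $\overline{M}^{(n)}$, push them through $M^{-1}=\left(\frac{1}{g(\bar f)},\bar f\right)$ using the FTRA, read off the $Z$- and $A$-series of $P^{(n)}_M$, and then simplify the Riordan array $\left(1-\frac{x\tilde Z}{\tilde A},\frac{x}{\tilde A}\right)^{-1}$ that it produces. As sanity checks, $n=1$ should give back $(g,f)$ and $n=2,3$ should recover $\tilde M$ and $\tilde{\tilde M}$.

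First I would record the column generating functions of $\overline{M}^{(n)}$. Since column $k$ of $M$ has generating function $g(x)f(x)^k$ and we delete the top $n$ rows, column $k$ of $\overline{M}^{(n)}$ has generating function $\frac{1}{x^n}\left(g(x)f(x)^k-\sum_{j=0}^{n-1}[x^j]\bigl(g(x)f(x)^k\bigr)x^j\right)$. Because $g_0=f_1=1$, the series $g(x)f(x)^k$ begins with the monomial $x^k$ (coefficient $1$) for $k\ge 1$, so for $k\ge n$ the correction sum vanishes and the generating function is simply $\frac{g f^k}{x^n}$, while for $k=n-1$ exactly the single term $x^{n-1}$ is subtracted. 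The columns with $k=0,1,\dots,n-2$ are precisely the ones discarded by the operation $|^{(n-1)}$, so they play no role.

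Next I would apply $M^{-1}$: by the FTRA, $\left(\frac{1}{g(\bar f)},\bar f\right)$ sends a generating function $h(x)$ to $\frac{1}{g(\bar f)}h(\bar f)$, and using $f(\bar f(x))=x$ this turns the $k\ge n$ column $\frac{g f^k}{x^n}$ into $\frac{x^k}{\bar f^{\,n}}$ and the $k=n-1$ column into $\frac{x^{n-1}}{\bar f^{\,n}}-\frac{1}{\bar f\, g(\bar f)}$. After deleting the first $n-1$ columns the former $k$-th column becomes column $k-(n-1)$ of $P^{(n)}_M$; matching against the general shape of a production matrix (column $0$ has generating function $Z$, column $j\ge 1$ has generating function $x^{j-1}A$) gives $A^{(n)}(x)=\bigl(\frac{x}{\bar f}\bigr)^{n}$ and $Z^{(n)}(x)=\frac{x^{n-1}}{\bar f^{\,n}}-\frac{1}{\bar f\, g(\bar f)}$. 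This simultaneously confirms that $P^{(n)}_M$ really has the form of a production matrix (lower Hessenberg with constant diagonals and $A^{(n)}(0)=1\neq 0$), so that $M^{(n)}=\left(1-\frac{xZ^{(n)}}{A^{(n)}},\frac{x}{A^{(n)}}\right)^{-1}$. A short computation then gives $\frac{x}{A^{(n)}}=\frac{\bar f^{\,n}}{x^{n-1}}$ and $1-\frac{xZ^{(n)}}{A^{(n)}}=\frac{\bar f^{\,n-1}}{x^{n-1}g(\bar f)}$, whence $M^{(n)}=\left(\frac{\bar f^{\,n-1}}{x^{n-1}g(\bar f)},\frac{\bar f^{\,n}}{x^{n-1}}\right)^{-1}$. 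To finish, I would identify this with the claimed product by inverting the right-hand side: $\bigl[\bigl((x/f)^{n-1},x(x/f)^{n-1}\bigr)^{-1}(g,f)\bigr]^{-1}=(g,f)^{-1}\cdot\bigl((x/f)^{n-1},x(x/f)^{n-1}\bigr)=\left(\frac{1}{g(\bar f)},\bar f\right)\cdot\bigl((x/f)^{n-1},x(x/f)^{n-1}\bigr)$, and evaluating this composition at $x\mapsto\bar f$ using $f(\bar f)=x$ reproduces exactly $\left(\frac{\bar f^{\,n-1}}{x^{n-1}g(\bar f)},\frac{\bar f^{\,n}}{x^{n-1}}\right)$.

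I expect the one genuinely delicate point to be the bookkeeping around the truncations: one must check that for every remaining column index $k\ge n-1$ precisely the minimal correction occurs (nothing for $k\ge n$, the single monomial $x^{n-1}$ for $k=n-1$), which is where the normalization $g_0=f_1=1$ is used, and that the re-indexing after deleting $n-1$ columns lines the entries up with the standard $(Z,A)$ description of a production matrix. Everything else is the same power-series algebra as in the $n=2$ and $n=3$ cases; one could alternatively package matters as an induction on $n$ by expressing $\overline{M}^{(n+1)}$ and $M^{(n+1)}$ in terms of their predecessors, but the direct computation above seems the cleanest route and matches the style of the existing proofs.
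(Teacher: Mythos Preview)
Your argument is correct. Note, however, that the paper states this result as a \emph{conjecture} and gives no proof; it only proves the cases $n=2$ and $n=3$. Your computation is precisely the natural extension of those two proofs to arbitrary $n$: the column generating functions of $\overline{M}^{(n)}$ are handled exactly as in the paper's $n=2$ case, the application of $M^{-1}$ via the FTRA yields $A^{(n)}=(x/\bar f)^{\,n}$ and $Z^{(n)}=\frac{x^{n-1}}{\bar f^{\,n}}-\frac{1}{\bar f\,g(\bar f)}$ (matching the paper's $\tilde A,\tilde Z$ and $\tilde{\tilde A},\tilde{\tilde Z}$ when $n=2,3$), and the algebraic simplification of $\left(1-\frac{xZ^{(n)}}{A^{(n)}},\frac{x}{A^{(n)}}\right)$ to $\left(\frac{\bar f^{\,n-1}}{x^{n-1}g(\bar f)},\frac{\bar f^{\,n}}{x^{n-1}}\right)$ and the identification with the claimed product mirror the paper's calculations line for line. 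The one point worth making explicit (you allude to it) is that the apparent $1/x$ singularities in the two terms of $Z^{(n)}$ cancel, since both $\frac{x^n}{\bar f^{\,n}}$ and $\frac{x}{\bar f\,g(\bar f)}$ have constant term $1$ under the normalization $g_0=f_1=1$; this confirms that $P^{(n)}_M$ is a bona fide production matrix. In short, you have supplied the proof the paper omits, by the method the paper itself suggests.
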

\begin{example}
We take the case of 
$$M=(g(x), f(x))=(c(x), xc(x)),$$ which is the Catalan array $\left(\frac{k+1}{2n-k+1}\binom{2n-k+1}{n-k}\right)$ \seqnum{A033184} that begins
$$\left(
\begin{array}{ccccccc}
 1 & 0 & 0 & 0 & 0 & 0 & 0 \\
 1 & 1 & 0 & 0 & 0 & 0 & 0 \\
 2 & 2 & 1 & 0 & 0 & 0 & 0 \\
 5 & 5 & 3 & 1 & 0 & 0 & 0 \\
 14 & 14 & 9 & 4 & 1 & 0 & 0 \\
 42 & 42 & 28 & 14 & 5 & 1 & 0 \\
 132 & 132 & 90 & 48 & 20 & 6 & 1 \\
\end{array}
\right).$$ The second production matrix of this array begins 
$$\left(
\begin{array}{cccccc}
 2 & 1 & 0 & 0 & 0 & 0 \\
 3 & 2 & 1 & 0 & 0 & 0 \\
 4 & 3 & 2 & 1 & 0 & 0 \\
 5 & 4 & 3 & 2 & 1 & 0 \\
 6 & 5 & 4 & 3 & 2 & 1 \\
 7 & 6 & 5 & 4 & 3 & 2 \\
\end{array}
\right),$$ which produces the matrix $\tilde{M}=\frac{2k+1}{3n-k+2}\binom{3n-k+2}{n-k}$ \seqnum{A092276} that begins
$$\left(
\begin{array}{cccccc}
 1 & 0 & 0 & 0 & 0 & 0 \\
 2 & 1 & 0 & 0 & 0 & 0 \\
 7 & 4 & 1 & 0 & 0 & 0 \\
 30 & 18 & 6 & 1 & 0 & 0 \\
 143 & 88 & 33 & 8 & 1 & 0 \\
 728 & 455 & 182 & 52 & 10 & 1 \\
\end{array}
\right).$$
We have $\bar{f}=x(1-x)$ and $\frac{1}{g(\bar{f})}=1-x$. 
We find that 
$$\tilde{M}=\left((1-x)^2, x(1-x)^2\right)^{-1}=\left(\frac{1}{c(x)}, \frac{x}{c(x)}\right)\cdot (c(x), xc(x)).$$ We have that the third production matrix
$\tilde{\tilde{P}}_M$ begins 
$$\left(
\begin{array}{ccccccc}
 3 & 1 & 0 & 0 & 0 & 0 & 0 \\
 6 & 3 & 1 & 0 & 0 & 0 & 0 \\
 10 & 6 & 3 & 1 & 0 & 0 & 0 \\
 15 & 10 & 6 & 3 & 1 & 0 & 0 \\
 21 & 15 & 10 & 6 & 3 & 1 & 0 \\
 28 & 21 & 15 & 10 & 6 & 3 & 1 \\
 36 & 28 & 21 & 15 & 10 & 6 & 3 \\
\end{array}
\right).$$ The matrix $\tilde{\tilde{M}}$ that it produces then begins 
$$\left(
\begin{array}{ccccccc}
 1 & 0 & 0 & 0 & 0 & 0 & 0 \\
 3 & 1 & 0 & 0 & 0 & 0 & 0 \\
 15 & 6 & 1 & 0 & 0 & 0 & 0 \\
 91 & 39 & 9 & 1 & 0 & 0 & 0 \\
 612 & 272 & 72 & 12 & 1 & 0 & 0 \\
 4389 & 1995 & 570 & 114 & 15 & 1 & 0 \\
 32890 & 15180 & 4554 & 1012 & 165 & 18 & 1 \\
\end{array}
\right).$$ 
We find that 
$$\tilde{\tilde{M}}=\left((1-x)^3, x(1-x)^3\right)^{-1}=\left(\frac{1}{c(x)^2}, \frac{x}{c(x)^2}\right)^{-1}\cdot (c(x), x c(x)).$$
Proceeding, we find that $\tilde{\tilde{\tilde{P}}}_M=P^{(4)}_M$ begins 
$$\left(
\begin{array}{ccccccc}
 4 & 1 & 0 & 0 & 0 & 0 & 0 \\
 10 & 4 & 1 & 0 & 0 & 0 & 0 \\
 20 & 10 & 4 & 1 & 0 & 0 & 0 \\
 35 & 20 & 10 & 4 & 1 & 0 & 0 \\
 56 & 35 & 20 & 10 & 4 & 1 & 0 \\
 84 & 56 & 35 & 20 & 10 & 4 & 1 \\
 120 & 84 & 56 & 35 & 20 & 10 & 4 \\
\end{array}
\right),$$ and produces the array $M^{(4)}$ that begins 
$$\left(
\begin{array}{ccccccc}
 1 & 0 & 0 & 0 & 0 & 0 & 0 \\
 4 & 1 & 0 & 0 & 0 & 0 & 0 \\
 26 & 8 & 1 & 0 & 0 & 0 & 0 \\
 204 & 68 & 12 & 1 & 0 & 0 & 0 \\
 1771 & 616 & 126 & 16 & 1 & 0 & 0 \\
 16380 & 5850 & 1300 & 200 & 20 & 1 & 0 \\
 158224 & 57536 & 13485 & 2320 & 290 & 24 & 1 \\
\end{array}
\right).$$
We have 
$$M^{(4)}=\left((1-x)^4, x(1-x)^4\right)^{-1}=\left(\frac{1}{c(x)^3}, \frac{x}{c(x)^3}\right)\cdot (c(x), xc(x)).$$ 
\end{example}

\section{A family of orthogonal polynomials}
We finish this note by giving an example of the foregoing in the area of constant coefficient orthogonal polynomials defined by Riordan arrays \cite{classical}. For this, we recall that the binomial matrix $B=\left(\binom{n}{k}\right)$, otherwise known as Pascal's triangle, is given by
$$\left(\frac{1}{1-x}, \frac{x}{1-x}\right).$$ 
Then we have that $B^r$ is given by 
$$\left(\frac{1}{1-rx}, \frac{x}{1-rx}\right).$$ In fact, the collection of these matrices $B^r, r \in \mathbb{Z}$ forms a one-parameter subgroup of the Riordan group. 

\begin{proposition} 
The family $M(r)$ of matrices produced by the second production matrix of $B^r$ gives a one-parameter collection of moment arrays of a family of orthogonal polynomials. The inverse matrices $M(r)^{-1}$ are the coefficient arrays of these polynomials, with $P_n(x;r)=\sum_{k=0}^n (M(r)^{-1})_{n,k}x^k$.
\end{proposition}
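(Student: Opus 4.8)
The plan is to compute $M(r)$ explicitly via the principal result and then exhibit its production matrix as a tridiagonal (Jacobi) matrix. First I would apply the Proposition with the data $g(x)=\frac{1}{1-rx}$, $f(x)=\frac{x}{1-rx}$ of $B^r$. Solving $f(\bar f)=x$ gives $\bar f(x)=\frac{x}{1+rx}$, hence $g(\bar f(x))=1+rx$, and substituting into $\left(\frac{\bar f}{x\,g(\bar f)},\frac{\bar f^2}{x}\right)^{-1}$ yields
$$M(r)=\left(\frac{1}{(1+rx)^2},\frac{x}{(1+rx)^2}\right)^{-1},\qquad\text{so that}\qquad M(r)^{-1}=\left(\frac{1}{(1+rx)^2},\frac{x}{(1+rx)^2}\right).$$
(By the Corollary this is also $M(r)=\bigl(1-rx,\,x(1-rx)\bigr)^{-1}\cdot B^r$.) In particular the $n$-th row of $M(r)^{-1}$ really is the coefficient vector of $P_n(x;r)$; a routine coefficient extraction then gives the closed form $P_n(x;r)=\sum_{k=0}^n\binom{n+k+1}{n-k}(-r)^{n-k}x^k$.

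Next I would identify the production matrix $P_{M(r)}$ of $M(r)$. By the identity from the introduction that every Riordan array equals $\left(1-\frac{xZ(x)}{A(x)},\frac{x}{A(x)}\right)^{-1}$ in terms of the $Z$- and $A$-series of its production matrix, comparison with the expression above for $M(r)^{-1}$ gives $A(x)=(1+rx)^2=1+2rx+r^2x^2$ and $Z(x)=2r+r^2x$. The decisive point is that $A(x)$ has degree $2$ (with $a_0=1$) and $Z(x)$ has degree $1$; given the form of a production matrix displayed in the introduction, this forces $P_{M(r)}$ to be tridiagonal — explicitly, the Jacobi matrix with every main-diagonal entry $2r$, every superdiagonal entry $1$, and every subdiagonal entry $r^2$.

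Finally I would invoke the standard dictionary between Riordan arrays and constant-coefficient orthogonal polynomials from \cite{classical}: a Riordan array whose production matrix is tridiagonal is the moment array of the family of monic orthogonal polynomials whose three-term recurrence is read off from that Jacobi matrix, and its inverse is the coefficient array of those polynomials. Applied to $M(r)$ this shows $M(r)$ is a moment array, $M(r)^{-1}$ is the coefficient array of the monic orthogonal polynomials $P_n(x;r)$, and that
$$P_0(x;r)=1,\qquad P_1(x;r)=x-2r,\qquad P_{n+1}(x;r)=(x-2r)P_n(x;r)-r^2P_{n-1}(x;r)\quad(n\ge 1),$$
so that, after the shift $x\mapsto x+2r$ and a rescaling, the $P_n(x;r)$ are Chebyshev polynomials of the second kind; letting $r$ range gives the asserted one-parameter family. (In passing, the moments come out as $\mu_n=r^nC_{n+1}$ with $C_n$ the Catalan numbers.)

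I do not expect a genuine obstacle here. The only care needed is in the algebra with the compositional inverse in the first step — getting $\bar f$ and $g(\bar f)$ exactly right so that $M(r)^{-1}$ simplifies to $\left(\frac{1}{(1+rx)^2},\frac{x}{(1+rx)^2}\right)$ — and in noting that the degree bounds $\deg A\le 2$ and $\deg Z\le 1$ are precisely what annihilates every entry of the production matrix outside the three central diagonals. One should also observe that $r=0$ yields $B^0=I$ and the degenerate recurrence $\beta_n=0$, so the meaningful family is indexed by $r\neq 0$, where $\beta_n=r^2\neq 0$ ensures the orthogonal family is non-degenerate.
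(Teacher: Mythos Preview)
Your proof is correct and follows essentially the same strategy as the paper: compute $M(r)$ explicitly from the principal result, observe that its production matrix is the tridiagonal Jacobi matrix with diagonal $2r$, superdiagonal $1$, subdiagonal $r^2$, and invoke the Riordan-array/orthogonal-polynomial correspondence from \cite{classical} to read off the three-term recurrence. The only cosmetic difference is that the paper reaches $M(r)^{-1}=\left(\frac{1}{(1+rx)^2},\frac{x}{(1+rx)^2}\right)$ via the Corollary and the intermediate Catalan form $M(r)=(c(rx)^2,\,xc(rx)^2)$, whereas you go there directly through the Proposition and then extract $A(x)=(1+rx)^2$, $Z(x)=2r+r^2x$ to certify tridiagonality; your route is marginally more streamlined, while the paper's detour yields the explicit entry formula $M(r)_{n,k}=\frac{2(k+1)}{n+k+2}\binom{2n+1}{n-k}r^{n-k}$ along the way.
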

\begin{proof}
We have that $M(r)$ is given by
\begin{align*}
M(r)&=\left((1-rx), x(1-rx)\right)^{-1}\cdot \left(\frac{1}{1-rx}, \frac{x}{1-rx}\right)\\
&=(c(rx), x c(rx))\cdot \left(\frac{1}{1-rx}, \frac{x}{1-rx}\right)\\
&=\left(\frac{c(rx)}{1-rxc(rx)}, \frac{x c(rx)}{1-r x c(rx)}\right)\\
&=\left(c(rx)^2, x c(rx)^2\right).\end{align*}
Thus we have 
$$M(r)_{n,k}=\frac{2(k+1)}{n+k+2} \binom{2n+1}{n-k}r^{n-k}.$$ 
The production matrix of $M(r)$ begins 
$$\left(
\begin{array}{cccccc}
 2 r & 1 & 0 & 0 & 0 & 0 \\
 r^2 & 2 r & 1 & 0 & 0 & 0 \\
 0 & r^2 & 2 r & 1 & 0 & 0 \\
 0 & 0 & r^2 & 2 r & 1 & 0 \\
 0 & 0 & 0 & r^2 & 2 r & 1 \\
 0 & 0 & 0 & 0 & r^2 & 2 r \\
\end{array}
\right).$$
Then the inverse matrix 
$$M(r)^{-1}=\left(\frac{1}{(1+rx)^2}, \frac{x}{(1+rx)^2}\right)$$ is the coefficient array of the family of orthogonal polynomials $P_n(x;r)$ that satisfy the three-term recurrence 
$$P_n(x;r)=(x-2r) P_{n-1}(x;r)-r^2 P_{n-2}(x;r),$$ with 
$P_0(x;r)=1$ and $P_1(x;r)=x-2r$.
\end{proof}
We finally note that the matrices 
$$((1-x)^k, x(1-x)^k)^{-1}\cdot \left(\frac{1}{1-x}, \frac{x}{1-x}\right)$$ produced by the $k$-th production matrices of $B$ are the matrices $A^{k,0}$ of \cite{Drube}

\section{Conclusions} We have shown that by passing to the higher production matrices (as described in this note) we are led to further Riordan arrays of interest. In this way, for each Riordan array, we have produced a canonical sequence of Riordan arrays associated to it by this production matrix process.  We note that in the special case of the identity matrix, the process once again produces the identity matrix at each stage. More generally, we see that the Appell subgroup (that is, Riordan arrays of the form $(g(x),x)$) is invariant under this process.

An alternative process that associates a sequence of Riordan arrays to each Riordan array is to take the matrix produced by the second production matrix, and then to repeat this operation on that matrix. Thus we always take the second production matrix of the current matrix. For instance, starting with the binomial matrix 
$$\left(\frac{1}{1-x}, \frac{x}{1-x}\right)=\left(\frac{1}{1+x}, \frac{x}{1+x}\right)^{-1},$$ we get the following sequence of matrices:
\begin{scriptsize}
$$\left(\frac{1}{1+x}, \frac{x}{1+x}\right)^{-1} \to \left(\frac{1}{(1+x)^2}, \frac{x}{(1+x)^2}\right)^{-1} \to \left(\frac{1}{(1+x)^4}, \frac{x}{(1+x)^4}\right)^{-1} \to \left(\frac{1}{(1+x)^8}, \frac{x}{(1+x)^8}\right)^{-1} \to \cdots.$$
\end{scriptsize}

\bigskip
\hrule
\bigskip
\noindent 2020 {\it Mathematics Subject Classification}: Primary
11C20; Secondary 11B83, 15A21, 15B36
\noindent \emph{Keywords:}  Riordan array, Riordan group, production matrix.

\bigskip
\hrule
\bigskip
\noindent (Concerned with sequences
\seqnum{A000108},\seqnum{A007318}, \seqnum{A033184}, \seqnum{A085478}, and \seqnum{A092276}.)

\end{document}